\def\nc{\newcommand}
\nc\pa{\partial}
\nc\CC{\mathbb{C}}
\nc\RR{\mathbb{R}}
\nc\QQ{\mathbb{Q}}
\nc\ZZ{\mathbb{Z}}
\nc\NN{\mathbb{N}}
\nc\m[1]{\left| #1\right|}
\nc\norm[1]{\left\| #1\right\|}
\newtheorem{theorem}{Theorem}[section]
\newtheorem{lemma}[theorem]{Lemma}
\newtheorem{remark}[theorem]{Remark}        
\numberwithin{equation}{section}
\begin{document}

	\title[On a capacitary strong type inequality]{On a capacitary strong type inequality and related capacitary estimates}

	\author[Keng Hao Ooi]
	{Keng Hao Ooi}
	\address{Department of Mathematics,
		Louisiana State University,
		303 Lockett Hall, Baton Rouge, LA 70803, USA.}
	\email{kooi1@lsu.edu}

	\author[Nguyen Cong Phuc]
	{Nguyen Cong Phuc}
	\address{Department of Mathematics,
		Louisiana State University,
		303 Lockett Hall, Baton Rouge, LA 70803, USA.}
	\email{pcnguyen@math.lsu.edu}

	\begin{abstract} We establish a  Maz'ya type  capacitary inequality which resolves a special case of a conjecture by David R. Adams. As a consequence, we obtain several equivalent norms for   Choquet integrals associated to Bessel or Riesz capacities.  This enables us to obtain bounds for the  Hardy-Littlewood maximal function in a sublinear setting.
	\end{abstract}

	\maketitle
	
	2010 Mathematics Subject Classification: 31, 42. 
	
	Keywords: nonlinear potential theory, capacity, maximal function, capacitary strong type inequality, Choquet integral.
	
\section{Introduction}

Let $\alpha$ be a real number and $s>1$. We define the space of Bessel potentials $H^{\alpha, s}=H^{\alpha, s}(\mathbb{R}^n)$, $n\geq 1$, 
as the completion of $C_c^\infty(\mathbb{R}^n)$ with respect to the norm 
$$\|u\|_{H^{\alpha,s}}=\|\mathcal{F}^{-1}[(1+|\xi|^2)^{\frac{\alpha}{2}}\mathcal{F}(u)] \|_{L^s(\mathbb{R}^n)},$$ 
where  $\mathcal{F}$ is the Fourier transform in $\mathbb{R}^n$. In the case $\alpha>0$, it follows  that (see, e.g., \cite{MH}) a function $u$ belongs to $H^{\alpha,s}$ if and only if
$$u= G_{\alpha}* f$$ 
for some $f\in L^s(\mathbb{R}^n)$, and moreover
$\|u\|_{H^{\alpha, s}}=\|f\|_{L^s(\mathbb{R}^n)}.$ 
Here $G_\alpha$ is  the Bessel kernel of order $\alpha$ defined by $G_{\alpha}(x):= \mathcal{F}^{-1}[(1+|\xi|^2)^{\frac{-\alpha}{2}}](x)$.

Recall that the  Bessel  capacity associated to the Bessel potential space 
$H^{\alpha, s}$ is defined for any set  $E\subset\mathbb{R}^n$ by 
\begin{equation*} 
{\rm Cap}_{\alpha, \,s}(E):=\inf\Big\{\|f\|_{L^s(\mathbb{R}^n)}^{s}: f\geq0, G_{\alpha}*f\geq 1 {\rm ~on~} E \Big\}.
\end{equation*}

A function $f: \RR^n\rightarrow [-\infty, +\infty]$ is said to be defined  quasieverywhere (q.e.) if it is defined  at every point of $\RR^n$ except for only a set of 
zero  capacity $\text{Cap}_{\alpha,s}$.
The notion of Choquet integral associated to Bessel  capacities will be important in this work. For a q.e. defined function $w:\RR^n \rightarrow [0,\infty]$, the Choquet integrals of $w$ is defined by 
\begin{equation*}
\int_{\RR^n} w\, d {\rm Cap}_{\alpha,s}:=\int_{0}^{\infty}\text{Cap}_{\alpha,s}(\{x\in\mathbb{R}^n: w(x)>t\})dt.
\end{equation*}

One of the fundamental results of  potential theory is the following Maz'ya's capacitary inequality, originally obtained by Maz'ya, and subsequently extended by Adams,  Dahlberg, and Hansson:
\begin{equation*} 
\int_{\RR^n} (G_\alpha * f)^s d{\rm Cap}_{\alpha,s} \leq  A \int_{\RR^n} f^s dx,
\end{equation*}
which holds for any nonnegative Lebesgue measurable function $f$.   See, e.g., \cite{Maz, MS, AH}, and in particular, see Section 2.3.1 and the historical comments in Section 2.3.13 of \cite{Maz}. This kind of  capacitary  inequalities and their many applications  are discussed in Chapters 2, 3 and 11 of \cite{Maz}.

In \cite{Ad2}, Adams conjectured (in the context of Riesz capacities and Riesz  potentials) that   another capacitary strong type inequality 
\begin{equation}\label{capstrong2}
\int_{\RR^n} (G_\alpha * f) d {\rm Cap}_{\alpha,s} \leq  A \int_{\RR^n} f^s (G_\alpha * f)^{1-s}  dx
\end{equation}
holds for any nonnegative Lebesgue measurable function $f$ (see Equ. (3.11) in \cite{Ad2}). (The integral $\int_{\RR^n} f^s (G_\alpha * f)^{1-s}dx$ is understood
as $\infty$ whenever $f=\infty$ on a set of positive Lebesgue measure. In the case $f\equiv 0$, it is understood as 0). Moreover, he essentially showed  for the corresponding Riesz capacities and   potentials that this is true provided $\alpha$ is an \emph{integer} in $(0, n)$ (see page 23 in \cite{Ad2}). However, we observe that his argument does not appear to work for Bessel  capacities and Bessel  potentials as in \eqref{capstrong2} even with  integers $\alpha\in (0,n)$.

One of the main purposes of this note is to verify \eqref{capstrong2} for any real $\alpha>0$. 
\begin{theorem}\label{AdamsCon}
	Let $\alpha>0$ and $s>1$ be such that $\alpha s\leq n$. There exists a constant $A>0$ such that \eqref{capstrong2} holds for any nonnegative Lebesgue measurable function $f$.
\end{theorem}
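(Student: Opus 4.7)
The strategy is to dyadically decompose the level sets of $u:=G_\alpha*f$ and to apply an annular-localized form of Maz'ya's capacitary inequality. For $k\in\ZZ$, set $E_k:=\{u>2^k\}$ and $A_k:=\{2^{k-1}<u\le 2^k\}$. Cavalieri's principle together with the monotonicity of ${\rm Cap}_{\alpha,s}$ gives
\[
\int_{\RR^n} u\, d{\rm Cap}_{\alpha,s}
=\int_0^\infty {\rm Cap}_{\alpha,s}(\{u>t\})\, dt
\le \sum_{k\in\ZZ} 2^k\, {\rm Cap}_{\alpha,s}(E_k).
\]
The hypothesis $\alpha s\le n$ guarantees that ${\rm Cap}_{\alpha,s}$ is non-degenerate and makes the nonlinear potential-theoretic tools below applicable.

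\textbf{Key annular estimate.} The heart of the proof is the annular capacitary inequality
\[
{\rm Cap}_{\alpha,s}(E_k)\ \le\ C\, 2^{-ks}\int_{A_k} f^s\, dx,\qquad k\in\ZZ.
\]
To obtain it, one naturally considers the truncation $v_k:=\min(u,2^k)-\min(u,2^{k-1})$, which vanishes on $\{u\le 2^{k-1}\}$, equals $u-2^{k-1}$ on $A_k$, and equals $2^{k-1}$ on $E_k$. The aim is to construct a nonnegative $g_k\in L^s(\RR^n)$ with $G_\alpha*g_k\ge v_k$ pointwise and $\|g_k\|_{L^s}^s\le C\int_{A_k} f^s\, dx$; then $g_k/2^{k-1}$ is admissible in the definition of ${\rm Cap}_{\alpha,s}(E_k)$, yielding the estimate. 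Building such $g_k$ is the principal technical difficulty and is expected to require nonlinear potential-theoretic input (Wolff's inequality, Kerman--Sawyer-type trace characterizations, Hansson-style truncation of Choquet integrals), because the gradient-truncation lemma of Maz'ya that underlies the classical strong-type inequality is unavailable for Bessel potentials of fractional order.

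\textbf{Summation.} Granting the annular bound, since $u\le 2^k$ on $A_k$ and $s>1$ the exponent $1-s$ is negative, so $u^{1-s}\ge 2^{k(1-s)}$ on $A_k$. Hence
\[
2^k\,{\rm Cap}_{\alpha,s}(E_k)\ \le\ C\, 2^{k(1-s)}\int_{A_k} f^s\, dx\ \le\ C\int_{A_k} f^s\, u^{1-s}\, dx.
\]
Because $\{A_k\}_{k\in\ZZ}$ is a pairwise disjoint partition of $\{u>0\}$, summing over $k$ produces the upper bound $C\int_{\RR^n} f^s\, u^{1-s}\, dx$, which combined with the dyadic reduction above gives \eqref{capstrong2}.

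\textbf{Main obstacle.} The hard step is the annular bound. The classical Maz'ya inequality only delivers the cumulative estimate $\sum_k 2^{ks}\,{\rm Cap}_{\alpha,s}(E_k)\le C\|f\|_{L^s}^s$, which is not enough for our argument: if one tries to use only the looser sublevel bound ${\rm Cap}(E_k)\le C\,2^{-ks}\int_{\{u>2^{k-1}\}} f^s\,dx$, the subsequent summation introduces a factor $\sum_k 2^{k(1-s)}\chi_{\{u>2^{k-1}\}}$ whose partial sums blow up as $k\to-\infty$ since $1-s<0$. The annular refinement encodes a strictly sharper distribution of capacity across the level sets of $u$, and supplying it specifically for Bessel (as opposed to Riesz) capacities is where the argument must depart from Adams' original integer-$\alpha$ Riesz approach.
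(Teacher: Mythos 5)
Your plan hinges entirely on what you call the ``key annular estimate''
\[
{\rm Cap}_{\alpha,s}(E_k)\ \le\ C\, 2^{-ks}\int_{A_k} f^s\, dx,\qquad A_k=\{2^{k-1}<G_\alpha*f\le 2^k\},\ E_k=\{G_\alpha*f>2^k\},
\]
but this estimate is in fact false, not merely difficult. Take any nonnegative bounded $f\not\equiv 0$ supported in a fixed ball $B$. Then $u=G_\alpha*f$ is continuous and $\inf_{B}u>0$, while $u(x)\to 0$ as $|x|\to\infty$. For every $k$ with $2^{k}<\inf_{B}u$, the annulus $A_k$ is disjoint from $B={\rm supp}\,f$, so $\int_{A_k} f^s\,dx=0$; yet $E_k$ is a nonempty open set and ${\rm Cap}_{\alpha,s}(E_k)>0$. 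Equivalently, in your construction you would need a nonnegative $g_k$ with $\|g_k\|_{L^s}=0$, i.e.\ $g_k=0$, and simultaneously $G_\alpha*g_k\ge v_k=2^{k-1}$ on $E_k$, which is impossible. The reason the conjectured inequality \eqref{capstrong2} nevertheless holds is that, in this scenario, the tail of the series $\sum_k 2^k{\rm Cap}_{\alpha,s}(E_k)$ is controlled by the fast decay of $u$ at infinity and not by any local integral of $f^s$ over $A_k$; a term-by-term localization of Maz'ya's inequality at the granularity of individual dyadic shells is simply too strong. Your own discussion of the ``main obstacle'' shows that the looser sublevel bound is insufficient, but the fix is not to sharpen it to an annular bound — that route is blocked.

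The paper proceeds by an entirely different, non-level-set route: duality. Using the identification of the dual of the quasicontinuous Choquet $L^1$ space with the measure space $\mathfrak{M}^{\alpha,s}$ and the Maz'ya--Verbitsky bound $\norm{G_\alpha*|\mu|}_{M^{\alpha,s}_{s'}}\lesssim\norm{\mu}_{\mathfrak{M}^{\alpha,s}}$, one obtains $\int G_\alpha*f\,d{\rm Cap}_{\alpha,s}\lesssim\norm{f}_{(M^{\alpha,s}_{s'})'}$; then the Kalton--Verbitsky solution of the nonlinear integral equation $u=G_\alpha*(u^{s'})+|g|/M$ together with Fubini and Young's inequality gives $\norm{f}_{(M^{\alpha,s}_{s'})'}\lesssim\int f^s(G_\alpha*f)^{1-s}\,dx$. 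Combining the two yields \eqref{capstrong2} without any decomposition into level sets. If you wish to pursue a dyadic argument, you would need a global (cumulative) mechanism — e.g., a Wolff-type or nonlinear-potential summation that mixes scales — rather than a shell-by-shell inequality, because no shell-by-shell inequality of the form you wrote can hold.
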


Our proof of \eqref{capstrong2} is also applicable to the setting of 
Riesz capacities and   potentials, and thereby extends the above mentioned results of \cite{Ad2} to all real $\alpha\in(0,n)$.

Our approach to \eqref{capstrong2} is based mainly in our recent work \cite{OP} in which predual spaces to a Sobolev multiplier type space were considered. 
For $\alpha>0, s>1,$ and $p> 1$, let  $M^{\alpha,s}_p=M^{\alpha,s}_p(\mathbb{R}^n)$ be the Banach space of  functions $f\in L^p_{\rm loc}(\mathbb{R}^n)$
such that the trace inequality 
\begin{equation}\label{traceG}
\left(\int_{\mathbb{R}^n} (G_\alpha*h) ^s |f|^p dx\right)^{\frac{1}{p}} \leq A \|h\|_{L^s(\RR^n)}^{\frac{s}{p}}
\end{equation} 
holds for all nonnegative $h\in L^s(\RR^n)$. A norm of a function $f\in M^{\alpha,s}_p$ can be defined as
\begin{align}\label{sobolevN}
\|f\|_{M^{\alpha,s}_p}:= \sup_{K}\left(\frac{\int_{K}|f(x)|^{p}dx}{\text{Cap}_{\alpha,s}(K)}\right)^{1/p},
\end{align}
where the supremum is taken over all compact sets $K\subset{\mathbb{R}}^{n}$ with non-zero capacity. Note that the right-hand side of \eqref{sobolevN}  is known to be equivalent to the least possible constant $A$ in \eqref{traceG} (see \cite{MS, AH}).

In \cite{OP}, we showed that  a predual of $M^{\alpha,s}_p$ is its K\"othe dual space $(M^{\alpha,s}_p)'$ defined by
\begin{equation*}
(M^{\alpha,s}_p)'=\left\{{\rm measurable~ functions~} f: \sup\int|fg|dx<+\infty\right\},
\end{equation*}
where the supremum is taken over all functions $g$ in the unit ball of $M^{\alpha,s}_p$. The norm of $f\in (M^{\alpha,s}_p)'$ is defined as the above 
supremum. Thus we have 
$$[(M^{\alpha,s}_p)']^*=M^{\alpha,s}_p,$$
with equality of norms. Various characterizations of $(M^{\alpha,s}_p)'$ can be found in \cite{OP}. For our purpose here the case $p=s'=s/(s-1)$ is of special interest.  
In particular, as mentioned in Remark 2.10 in \cite{OP}, it follows from \cite{MV, KV} that  the space $M^{\alpha,s}_{s'}$ is an intrinsic space associated to the nonlinear integral equation
$$u= G_\alpha*(u^{s'}) +   f \qquad \text{a.e.}$$


Another important observation in \cite{OP} is the following equivalence:
\begin{equation}\label{funequN}
\int_{\RR^n} |u| d {\rm Cap}_{\alpha,s} \simeq \gamma_{\alpha,s}(u),
\end{equation}
which holds for all q.e. defined functions $u$ in $\RR^n$. Here the functional $\gamma_{\alpha,s}(\cdot)$ is defined for each q.e. defined function $u$ by
$$\gamma_{\alpha,s}(u):=\inf\left\{\int f^sdx: 0\leq f\in L^s(\RR^n) \text{ and } G_\alpha*f\geq |u|^{\frac{1}{s}} \text{ q.e.} \right\}.$$
Note that $\gamma_{\alpha,s}(tu)=|t|\gamma_{\alpha,s}(u)$ for all $t\in\RR$ and moreover $\gamma_{\alpha,s}(u_1+u_2)\leq \gamma_{\alpha,s}(u_1) +\gamma_{\alpha,s}(u_2)$ (see \cite{OP}). 
On the other hand, the Choquet integral $\int_{\RR^n} |\cdot| d {\rm Cap}_{\alpha,s}$ is known to be subadditive only for $s=2$ and $0<\alpha\leq 1$. In particular, the set of all q.e. defined functions 
$u$ in $\RR^n$ such that $\int_{\RR^n} |u| d {\rm Cap}_{\alpha,s}<+\infty$ is a normable space. An argument as in the proof of Proposition 2.3 in \cite{OP} can be used to show that this space is complete.

As a consequence of  \eqref{funequN} and the proof of Theorem \ref{AdamsCon}, in this paper we obtain two other characterizations for the Choquet integral.
For a q.e. defined function $u$ in $\RR^n$ we denote by $\lambda_{\alpha,s}(u)$ and $\beta_{\alpha,s}$, $\alpha>0, s>1$, the following quantities:
$$\lambda_{\alpha,s}(u):= \inf\left\{\norm{f}_{(M^{\alpha,s}_{s'})'}:  0\leq f\in (M^{\alpha,s}_{s'})' \text{ and }  G_\alpha*f\geq |u| \text{ q.e.} \right\}$$\
and
$$\beta_{\alpha,s}(u):= \inf\left\{\int_{\RR^n} f^s (G_\alpha*f)^{1-s}dx:  f\geq0,\,   G_\alpha*f\geq |u| \text{ q.e.} \right\}.$$

\begin{theorem}\label{Newnorm2} Let $\alpha>0$ and $s>1$ be such that $\alpha s\leq n$. For any q.e. defined function  $u$ in $\RR^n$ it holds that 
	\begin{equation}\label{funequNew}
	\int_{\RR^n} |u| d {\rm Cap}_{\alpha,s} \simeq \lambda_{\alpha,s}(u) \simeq \beta_{\alpha,s}(u).
	\end{equation}
	In particular, we have 
	$$
	{\rm Cap}_{\alpha,s}(E) \simeq \lambda_{\alpha,s}(\chi_{E}) \simeq \beta_{\alpha,s}(\chi_E)
	$$
	for any set $E\subset\RR^n$.
\end{theorem}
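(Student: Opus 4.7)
My plan is to establish $\int|u|\,d{\rm Cap}_{\alpha,s}\simeq\beta_{\alpha,s}(u)$ and $\int|u|\,d{\rm Cap}_{\alpha,s}\simeq\lambda_{\alpha,s}(u)$ in parallel; for convenience I write $\mathcal{E}(f):=\int f^s(G_\alpha*f)^{1-s}\,dx$ for nonnegative $f$, so that $\beta_{\alpha,s}(u)=\inf\mathcal{E}(f)$ over admissible $f$. The upper bound $\int|u|\,d{\rm Cap}_{\alpha,s}\lesssim\beta_{\alpha,s}(u)$ is essentially immediate: for any $f\geq0$ with $G_\alpha*f\geq|u|$ q.e., monotonicity of the Choquet integral with respect to q.e.\ inequalities combined with Theorem~\ref{AdamsCon} gives $\int|u|\,d{\rm Cap}_{\alpha,s}\leq\int(G_\alpha*f)\,d{\rm Cap}_{\alpha,s}\leq A\,\mathcal{E}(f)$, and we take the infimum over $f$.

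For the reverse $\beta_{\alpha,s}(u)\lesssim\int|u|\,d{\rm Cap}_{\alpha,s}$ I would proceed by a dyadic layer-cake construction. Set $E_k:=\{|u|>2^k\}$ for $k\in\ZZ$ and, for each $k$, choose a near-capacitary $f_k\geq 0$ with $G_\alpha*f_k\geq\chi_{E_k}$ q.e.\ satisfying the local bound $\mathcal{E}(f_k)\leq C\,{\rm Cap}_{\alpha,s}(E_k)$; the existence of such $f_k$ is the main technical sub-step, discussed below. Defining $f:=\sum_{k\in\ZZ}2^kf_k$, the telescoping $\sum_{k\leq j}2^k=2^{j+1}$ shows that at any point $x$ with $2^j<|u(x)|\leq 2^{j+1}$ one has $G_\alpha*f(x)\geq 2^{j+1}\geq|u(x)|$, so $G_\alpha*f\geq|u|$ q.e. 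Next, $\mathcal{E}$ is sublinear and positively $1$-homogeneous on the nonnegative cone, because the integrand $\phi(a,b):=a^sb^{1-s}$ on $(0,\infty)^2$ is positively $1$-homogeneous and jointly convex (a direct Hessian computation gives $\phi_{aa}\phi_{bb}=\phi_{ab}^2$ with $\phi_{aa}>0$ for $s>1$), so it is subadditive in $(a,b)$. Applying this pointwise to $(f,G_\alpha*f)=\sum 2^k(f_k,G_\alpha*f_k)$ and integrating yields $\mathcal{E}(f)\leq\sum_k 2^k\mathcal{E}(f_k)\lesssim\sum_k 2^k\,{\rm Cap}_{\alpha,s}(E_k)\simeq\int|u|\,d{\rm Cap}_{\alpha,s}$, the last equivalence being the standard dyadic form of the Choquet integral.

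The equivalence $\int|u|\,d{\rm Cap}_{\alpha,s}\simeq\lambda_{\alpha,s}(u)$ will follow from the identification $\|f\|_{(M^{\alpha,s}_{s'})'}\simeq\mathcal{E}(f)$ on the nonnegative cone. I plan to derive this from the K\"othe duality $[(M^{\alpha,s}_{s'})']^*=M^{\alpha,s}_{s'}$ of \cite{OP} together with Theorem~\ref{AdamsCon}: the direction $\mathcal{E}(f)\lesssim\|f\|_{(M^{\alpha,s}_{s'})'}$ will come from Theorem~\ref{AdamsCon} combined with the Choquet-integral characterization of ${\rm Cap}_{\alpha,s}$ through test functions in $M^{\alpha,s}_{s'}$, while the reverse is obtained by testing $f$ against suitable combinations of capacitary potentials of dyadic level sets of $G_\alpha*f$, whose $M^{\alpha,s}_{s'}$-norm is under control via the trace inequality~\eqref{traceG} with $p=s'$. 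Taking the infimum in the common constraint $G_\alpha*f\geq|u|$ q.e.\ then gives $\lambda_{\alpha,s}(u)\simeq\beta_{\alpha,s}(u)\simeq\int|u|\,d{\rm Cap}_{\alpha,s}$, and specializing to $u=\chi_E$ yields the capacity statement.

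The main obstacle is the local sub-claim $\mathcal{E}(f_E)\leq C\,{\rm Cap}_{\alpha,s}(E)$ for compact $E$. Taking $f_E$ to be the $L^s$-capacitary minimizer gives $\int f_E^s\,dx={\rm Cap}_{\alpha,s}(E)$ and $G_\alpha*f_E\leq1$ a.e., so the contribution to $\mathcal{E}(f_E)$ from $\{G_\alpha*f_E\geq 1/2\}$ is trivially bounded by $2^{s-1}\,{\rm Cap}_{\alpha,s}(E)$; on the complementary set, however, which may meet the support of $f_E$ substantially, the factor $(G_\alpha*f_E)^{1-s}$ blows up and the naive estimate fails. The route I would take to overcome this is to exploit the nonlinear potential representation $f_E=c(G_\alpha*\mu)^{s'-1}$ of the capacitary minimizer with $\mu$ the associated capacitary measure on $E$, and to combine it with the Havin--Maz'ya pointwise equivalence $G_\alpha*[(G_\alpha*\mu)^{s'-1}]\simeq W^\mu_{\alpha,s}$ and Wolff's inequality in order to estimate $\mathcal{E}(f_E)$ directly in terms of the Wolff energy $\int W^\mu_{\alpha,s}\,d\mu\simeq{\rm Cap}_{\alpha,s}(E)$.
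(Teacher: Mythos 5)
Your overall architecture is sound and in one place cleaner than the paper's: the dyadic superlevel sets $E_k=\{|u|>2^k\}$, the telescoping construction $f=\sum_k 2^k f_k$, and the observation that $\phi(a,b)=a^s b^{1-s}$ is positively $1$-homogeneous and jointly convex (hence subadditive) so that $\mathcal{E}(\sum 2^k f_k)\le \sum 2^k\mathcal{E}(f_k)$ --- all of this is correct, and it replaces the paper's slightly more delicate supremum construction $F=\sup_{j,k}2^k F_{j,k}$. Your treatment of the easy direction via Theorem~\ref{AdamsCon} matches the paper. The trouble is entirely concentrated in what you yourself identify as the ``main technical sub-step'': producing, for each small piece $E$, an admissible $f_E$ with $\mathcal{E}(f_E)\lesssim {\rm Cap}_{\alpha,s}(E)$.

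Your proposed choice $f_E=c(G_\alpha*\mu)^{s'-1}$ (the nonlinear capacitary potential itself) does not satisfy $\mathcal{E}(f_E)\lesssim {\rm Cap}_{\alpha,s}(E)$, and no amount of Wolff-potential bookkeeping fixes this, because the obstruction is genuine. With $g=G_\alpha*\mu$ and $V=G_\alpha*(g^{s'-1})$, the quantity $\mathcal{E}(f_E)=\int g^{s'} V^{1-s}\,dx$ carries the extra factor $V^{1-s}$, which blows up wherever $V$ is small. In the Riesz setting with $\mu$ the capacitary measure of a unit ball, one has for large $|x|$ that $g(x)\simeq |x|^{\alpha-n}$ and $V(x)\simeq |x|^{-(n-\alpha s)/(s-1)}$, and a direct exponent count shows $\int g^{s'}V^{1-s}\,dx=\infty$ whenever $s\ge 2$ (and also whenever $\alpha s=n$); in the Bessel setting the same phenomenon occurs for $s$ large, because $V^{1-s}$ grows exponentially while $g^{s'}$ decays only at a slower exponential rate. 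So the local sub-claim as you formulate it is false.

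The missing idea --- and the actual crux of the paper's argument --- is to take as admissible function not $f_E$ but
\[
F_E \;:=\; f_E\,(G_\alpha*f_E)^{s-1},
\]
and to establish the pointwise ``integration by parts'' inequality
\[
(G_\alpha*f_E)^s \;\le\; A\,G_\alpha*\bigl[f_E\,(G_\alpha*f_E)^{s-1}\bigr]\;=\;A\,G_\alpha*F_E.
\]
This inequality simultaneously shows that $G_\alpha*F_E\gtrsim 1$ q.e.\ on $E$ (admissibility) and, when plugged into $\mathcal{E}(F_E)$, makes the problematic factor cancel: $(G_\alpha*F_E)^{1-s}\le A^{s-1}(G_\alpha*f_E)^{s(1-s)}$, so $\mathcal{E}(F_E)\le A^{s-1}\int f_E^s\,dx = A^{s-1}{\rm Cap}_{\alpha,s}(E)$. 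Without the change of admissible function from $f_E$ to $F_E$, the estimate collapses. A second gap is that in the Bessel setting this pointwise inequality (the paper's Lemma~\ref{IBPL}) only holds when the capacitary measure has support of diameter $<1$; your level sets $E_k=\{|u|>2^k\}$ can be arbitrarily large, so you would additionally need to split each $E_k$ across a bounded-multiplicity covering by unit balls and invoke the quasi-additivity of ${\rm Cap}_{\alpha,s}$, exactly as the paper does with $E_{j,k}=E_k\cap\mathcal{B}^j$. (In the Riesz setting this localization is unnecessary since the corresponding inequality of Verbitsky--Wheeden holds globally.) Once these two ingredients are inserted, your sum-plus-convexity assembly does close the argument; the convexity route is a legitimate and slightly simpler alternative to the paper's supremum construction.
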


To discuss a consequence of Theorem \ref{Newnorm2}, we now  recall that the (center) local Hardy-Littlewood maximal function is 
defined for each $f\in L^1_{\rm loc}(\mathbb{R}^n)$ by 
\begin{equation*}
{\bf M}^{\rm loc} f (x)= \sup_{0<r\leq 1}   \frac{1}{|B_r(x)|} \int_{B_{r}(x)} |f(y)|dy.
\end{equation*} 
for every $x\in\RR^n$.

\begin{theorem}\label{Maximal} Let $\alpha>0$ and $s>1$ be such that $\alpha s\leq n$.  For any $q> (n-\alpha)/n$ and any measurable and q.e. defined  function $f$, we have 
	$$\int_{\RR^n} ({\bf M}^{\rm loc} f)^{q} d {\rm Cap}_{\alpha,s} \leq A(n,\alpha,s,q) \int_{\RR^n} |f|^q d {\rm Cap}_{\alpha,s}.$$	
\end{theorem}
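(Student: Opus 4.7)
My plan is to reduce the inequality to a pointwise estimate via the new characterization of the Choquet integral from Theorem~\ref{Newnorm2}. Using the equivalence $\int_{\RR^n} |u|\, d{\rm Cap}_{\alpha,s} \simeq \lambda_{\alpha,s}(u)$, it is enough to show
$$\lambda_{\alpha,s}\bigl(({\bf M}^{\rm loc}f)^q\bigr) \leq C\, \lambda_{\alpha,s}(|f|^q).$$
So fix any admissible $g \geq 0$ for $|f|^q$, i.e.\ with $G_\alpha * g \geq |f|^q$ q.e.; since a capacity-zero set has Lebesgue measure zero, $|f| \leq (G_\alpha * g)^{1/q}$ a.e., and hence $({\bf M}^{\rm loc}|f|)^q \leq \bigl[{\bf M}^{\rm loc}\bigl((G_\alpha * g)^{1/q}\bigr)\bigr]^q$ pointwise. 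The target then becomes the pointwise comparison
$$\bigl[{\bf M}^{\rm loc}\bigl((G_\alpha * g)^{1/q}\bigr)(x)\bigr]^q \leq C\,(G_\alpha * g)(x), \qquad x \in \RR^n, \quad q > (n-\alpha)/n.$$
Once this is in hand, $\tilde g := Cg$ is admissible for $({\bf M}^{\rm loc}f)^q$ with $\|\tilde g\|_{(M^{\alpha,s}_{s'})'} \lesssim \|g\|_{(M^{\alpha,s}_{s'})'}$, and taking the infimum over $g$ yields the theorem.

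For the pointwise comparison, I would fix $x$ and $0 < r \leq 1$ and split $g = g_1 + g_2$ with $g_1 = g \chi_{B_{2r}(x)}$. The far piece is handled by the standard doubling $G_\alpha(y-z) \leq C G_\alpha(x-z)$ for $y \in B_r(x)$, $z \notin B_{2r}(x)$, which yields $(G_\alpha * g_2)(y) \leq C (G_\alpha * g)(x)$ on $B_r(x)$. After the elementary bound $(a+b)^{1/q} \leq C_q(a^{1/q} + b^{1/q})$, matters reduce to showing
$$\frac{1}{|B_r|}\int_{B_r(x)} (G_\alpha * g_1)(y)^{1/q}\, dy \leq C\, (G_\alpha * g)(x)^{1/q}.$$
For $q \geq 1$, Jensen's inequality allows one to pull the $1/q$ power outside, after which the ordinary average is computed by Fubini and the bound $\int_{B_r(x)} G_\alpha(y-z)\, dy \lesssim r^\alpha$ for $r \leq 1$ gives $r^{\alpha-n}\int_{B_{2r}(x)} g$. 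For $0 < q < 1$, Jensen goes the wrong way, and I instead apply Minkowski's integral inequality in $L^{1/q}(B_r(x), dy)$ (a genuine norm since $1/q > 1$) together with the key kernel computation $\int_{B_{3r}(0)} G_\alpha(w)^{1/q}\, dw \lesssim r^{n + (\alpha-n)/q}$; this integral is finite precisely because $q > (n-\alpha)/n$. In both regimes one arrives at the same majorant $C\,\bigl(r^{\alpha-n}\int_{B_{2r}(x)} g\bigr)^{1/q}$, and the standard lower bound $(G_\alpha * g)(x) \gtrsim r^{\alpha-n}\int_{B_{2r}(x)} g(z)\, dz$ for $r \leq 1$ (which follows from $G_\alpha(w) \gtrsim |w|^{\alpha-n}$ in a neighborhood of the origin) closes the argument. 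Taking the supremum over $r \leq 1$ and raising to the $q$th power yields the pointwise comparison.

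The main obstacle is the subunit regime $0 < q < 1$, where Jensen's inequality points the wrong direction and the naive expansion of $t \mapsto t^{1/q}$ is super-, not sub-, additive. The $L^{1/q}$-Minkowski step is what forces the sharp threshold $q > (n-\alpha)/n$: this is exactly the critical exponent for the local integrability of $G_\alpha^{1/q}$ near the origin, without which the near-piece estimate breaks down. Assuming the pointwise comparison, the translation back via Theorem~\ref{Newnorm2} (equivalently via $\beta_{\alpha,s}$) is immediate, and the particular case $f = \chi_E$ recovers the set-level version with ${\bf M}^{\rm loc}\chi_E$ on the left.
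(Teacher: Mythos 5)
Your proposal is correct and follows essentially the same route as the paper: reduce via Theorem~\ref{Newnorm2} to showing that an admissible majorant $g$ for $|f|^q$ (say with $G_\alpha*g \geq |f|^q$ q.e., hence a.e.) gives, after multiplication by a constant, an admissible majorant for $({\bf M}^{\rm loc}f)^q$, the crux being the pointwise bound ${\bf M}^{\rm loc}\bigl[(G_\alpha*g)^{1/q}\bigr] \leq A\,(G_\alpha*g)^{1/q}$ valid for $q>(n-\alpha)/n$. The only differences are cosmetic or expository: you phrase the reduction through $\lambda_{\alpha,s}$ while the paper uses the equivalent $\beta_{\alpha,s}$, and — more substantively — you supply a self-contained proof of the pointwise maximal-function comparison (split into near/far pieces, far piece via the translation estimate $G_\alpha(x)\simeq G_\alpha(x+y)$ for $|x|\geq 3$, $|y|\leq 1$ together with $G_\alpha\simeq|\cdot|^{\alpha-n}$ near the origin, near piece via Jensen for $q\geq1$ and Minkowski's integral inequality in $L^{1/q}$ for $q<1$, with the threshold $q>(n-\alpha)/n$ coming from local integrability of $G_\alpha^{1/q}$), whereas the paper simply cites this estimate as Theorem~3.1 of \cite{OP}. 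Your sketch of that cited lemma is sound; the only mild caution is that your ``standard doubling'' for the far piece is really the two estimates \eqref{Ga1}--\eqref{Ga2} tailored to the constraint $r\leq 1$, not a genuine doubling property of $G_\alpha$ (which fails at infinity), but the way you use it — with $|y-x|<r\leq 1$ — is exactly what makes it valid.
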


An interesting aspect of Theorem \ref{Maximal} is that the power $q$ is allowed to be strictly less than $1$. Moreover, here we do not assume any continuity assumption 
on $f$. See \cite{Ad1}, Theorem 7.5 in \cite{AX}, and \cite{OV} for some related results.

Finally, we remark that Theorems \ref{AdamsCon}, \ref{Newnorm2}, and \ref{Maximal} also hold in the homogeneous setting provided $\alpha\in(0,n)$, $s>1$, and Bessel potentials and capacities are replaced by  the corresponding  Riesz potentials and capacities.  Moreover, in the homogeneous setting the local Hardy-Littlewood maximal function 
${\bf M}^{\rm loc}$ can be
replaced by the larger standard   Hardy-Littlewood maximal function.

Recall that the Riesz kernel $I_{\alpha}$, $\alpha\in (0,n)$, is  defined as the inverse Fourier transform of $|\xi|^{\alpha}$ (in the distributional sense), and explicitly we have 
$I_\alpha(x)= \gamma(n,\alpha) |x|^{\alpha-n}$, where $\gamma(n, \alpha)=\Gamma(\tfrac{n-\alpha}{2})/[\pi^{n/2} 2^\alpha \Gamma(\tfrac{\alpha}{2})]$. 
The Riesz potential of a nonnegative measure $\mu$ is defined by the convolution $I_\alpha*\mu$. For $\alpha\in(0,n)$ and $s>1$, the Riesz capacity ${\rm cap}_{\alpha,s}$ is defined  
for each set $E\subset\RR^n$ by 
\begin{equation*}
{\rm cap}_{\alpha, \,s}(E):=\inf\Big\{\|f\|_{L^{s}(\RR^n)}^{s}: f\geq0, I_{\alpha}*f\geq 1 {\rm ~on~} E \Big\}.
\end{equation*}

This capacity is the capacity associated to the homogeneous Sobolev space $\dot{H}^{\alpha,s}$ (see Section 9 in \cite{OP}).

\vspace{.2in}
\noindent {\bf Notation.}  The characteristic function of a set $E\subset\RR^n$ is denoted by $\chi_{E}$.
For two quantities $A$ and $B$, we write $A\simeq B$ to mean that there exist positive constants $c_1$ and $c_2$ such that $c_1 A\leq B\leq c_2 A$.

\section{Proof of Theorem \ref{AdamsCon}}
\begin{proof}[Proof of Theorem \ref{AdamsCon}] 
	Let $L^1(C)$ denote the space of quasicontinuous function $f$ in $\RR^n$ such that 
	$$\norm{f}_{L^1(C)}:=\int_{\RR^n} |f| d{\rm Cap}_{\alpha,s}<+\infty.$$ 
	Recall a function $f$ is said to be quasicontinuous
	(with respect to  ${\rm Cap}_{\alpha, s}$) if for any $\epsilon>0$ there exists an open set $O$ such that ${\rm Cap}_{\alpha,s}(O)<\epsilon$ and $f$ is continuous in $O^c:=\mathbb{R}^n\setminus O$. It is known that the dual of $L^1(C)$ can be identify with the space 
	$\mathfrak{M}^{\alpha,s}=\mathfrak{M}^{\alpha,s}(\mathbb{R}^n)$ which consists of   locally finite signed measures $\mu$ in $\mathbb{R}^n$ such that the norm $\|\mu\|_{\mathfrak{M}^{\alpha,s}}<+\infty$ (see Theorem 2.4 in \cite{OP}). Here  we define 
	\begin{align*}
	\|\mu\|_{\mathfrak{M}^{\alpha,s}}:=\sup_{K}\dfrac{|\mu|(K)}{\text{Cap}_{\alpha,s}(K)},
	\end{align*}
	where the supremum is taken over all compact sets $K\subset{\mathbb{R}}^{n}$ such that $\text{Cap}_{\alpha,s}(K)\not=0$.
	
	In view of \eqref{funequN}, $L^1(C)$ is normable and thus it follows from Hahn-Banach Theorem that  for any $u\in L^1(C)$ we have 
	\begin{equation}\label{HB}
	\norm{u}_{L^1(C)} \simeq \sup \left\{\left|\int u d\mu\right|: \|\mu\|_{\mathfrak{M}^{\alpha,s}}\leq 1\right\}.
	\end{equation}
	
	Let $f$ be a nonnegative measurable and bounded function with compact support. Applying  \eqref{HB} with $u=G_\alpha* f$ we have 
	\begin{align*}
	\int_{\RR^n} G_\alpha* f d {\rm Cap}_{\alpha,s} &\leq A \sup_{\|\mu\|_{\mathfrak{M}^{\alpha,s}}\leq 1} \int G_\alpha* f d|\mu|\\
	&= A \sup_{\|\mu\|_{\mathfrak{M}^{\alpha,s}}\leq 1} \int (G_\alpha* |\mu|) f dx\\
	&\leq A \norm{f}_{(M^{\alpha, s}_{s'})'}  \sup_{\|\mu\|_{\mathfrak{M}^{\alpha,s}}\leq 1}  \norm{G_\alpha* |\mu|}_{M^{\alpha, s}_{s'}} \\
	&\leq A \norm{f}_{(M^{\alpha, s}_{s'})'},
	\end{align*}
	where the last inequality follows from Theorem 1.2 in \cite{MV}. By density (see Remark 3.3 in \cite{OP}) we see that the inequality 
	\begin{equation}\label{GfL1C}
	\int_{\RR^n} G_\alpha* f d {\rm Cap}_{\alpha,s} \leq A \norm{f}_{(M^{\alpha, s}_{s'})'}
	\end{equation}
	holds for any nonnegative function $f\in (M^{\alpha, s}_{s'})'$.

	In proving \eqref{capstrong2} we may assume that $\int_{\RR^n} f^s (G_\alpha * f)^{1-s} dx<+\infty$ and hence $f$ is finite a.e. by our convention. In this case we must have that  $f\in (M^{\alpha, s}_{s'})'$.
	Indeed, for any $g\in M^{\alpha, s}_{s'}$ such that $\|g\|_{M^{\alpha, s}_{s'}}\leq 1$  by  Remark 2.10 in \cite{OP} and \cite{KV}, there exists a nonnegative  function 
	$u\in L^{s'}_{\rm loc}(\RR^n)$ such that
	$$u= G_\alpha*(u^{s'}) +   \frac{|g|}{ M} \qquad \text{a.e.}$$ 
	for a constant $M>0$ independent of $g$ and $u$. Thus, as in \cite{BP} (see also \cite{KV}), we have
	\begin{align}
	\label{intfg}\int_{\RR^n} f |g| dx &= M  \int_{\RR^n} f( u-G_\alpha*(u^{s'}))  dx\\
	&= M  \int_{\RR^n} (f u- u^{s'}G_\alpha*f)  dx\nonumber\\
	&= M  \int_{\RR^n} G_\alpha*f\left( u \frac{f}{G_\alpha*f}- u^{s'}\right)  dx \nonumber\\
	&\leq M s^{-s}(s-1)^{s-1}  \int_{\RR^n} f^s (G_\alpha*f)^{1-s}  dx, \nonumber
	\end{align} 
	where we used the Young's inequality $ab - a^{s'}/s' \leq b^{s}/s$, $a,b\geq 0$, in the last inequality. Thus taking the supremum over 
	$g\in M^{\alpha, s}_{s'}$ such that $\|g\|_{M^{\alpha, s}_{s'}}\leq 1$ in \eqref{intfg}, we find
	\begin{equation}\label{mvn}
	\|f\|_{(M^{\alpha, s}_{s'})'}\leq A \int_{\RR^n} f^s (G_\alpha*f)^{1-s}  dx <+\infty.
	\end{equation}
	
	Finally, combining \eqref{GfL1C} with \eqref{mvn} we obtain \eqref{capstrong2} as desired.
\end{proof}	

\begin{remark} We remark that   \eqref{capstrong2} and \eqref{GfL1C} are indeed equivalent.  On one hand, the proof above shows that  \eqref{GfL1C} implies \eqref{capstrong2}. On the other hand, \eqref{capstrong2} implies that 
	\begin{equation*}
	\int_{\RR^n} G_\alpha* f d {\rm Cap}_{\alpha,s} \leq A \norm{f}_{KV}
	\end{equation*}
	for any nonnegative measurable function $f$. Here we define
	$$\norm{f}_{KV}:=\inf\left\{\int_{\RR^n} h^s (G_\alpha*h)^{1-s}  dx: h\geq |f| \text{ a.e.} \right\}.$$
	($\norm{f}_{KV}$ is understood as $\infty$ if there is no  measurable function $h$ such that $h\geq |f|$ a.e. and 
	$\int_{\RR^n} h^s (G_\alpha*h)^{1-s}  dx<+\infty$.) As we observe in Remark 2.10 in \cite{OP},  the  two-sided bound $\|f\|_{(M^{\alpha, s}_{s'})'} \simeq \norm{f}_{KV}$
	follows from   \cite{KV, MV}. Thus  \eqref{capstrong2}  implies \eqref{GfL1C}.  
\end{remark}

%
%
%
%

\section{Proof of Theorem \ref{Newnorm2}}

In order to prove Theorem \ref{Newnorm2}, we first prove the following ``integration by parts" lemma.

\begin{lemma}\label{IBPL} Let $\alpha>0, s>1$ be such that $\alpha s\leq n$. Suppose that $\mu$ is a nonnegative measure such that the diameter of ${\rm supp}(\mu)$ is less than $1$. Then there is a constant $A=A(n,\alpha, s)>0$ such that, for $f=(G_{\alpha}*\mu)^{s'-1}$, we have 
	\begin{equation*}
	(G_{\alpha}*f)^s \leq A G_{\alpha}*[f (G_{\alpha}*f)^{s-1}] 
	\end{equation*}	 
	pointwise everywhere in $\RR^n$.
\end{lemma}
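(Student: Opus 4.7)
The plan is to reformulate the inequality as a pointwise chain-rule estimate for the Bessel operator $(1-\Delta)^{\alpha/2}$ applied to the convex function $\Phi(t)=t^s/s$, and then invert with the positive kernel $G_\alpha=(1-\Delta)^{-\alpha/2}$. Concretely, I would aim to establish the pointwise bound
\[
(1-\Delta)^{\alpha/2}(g^s/s)\ \leq\ g^{s-1}(1-\Delta)^{\alpha/2}g\ =\ g^{s-1}f,
\]
where $g=G_\alpha*f$. Since $G_\alpha$ is a convolution with a nonnegative kernel and therefore preserves pointwise inequalities, applying $G_\alpha*\,$ to both sides would yield $g^s \leq s\,G_\alpha*(fg^{s-1})$, which is the lemma with $A=s$.

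The cleanest range in which the chain rule holds is $0<\alpha\leq 2$. For $0<\alpha<2$, subordination from the heat semigroup produces a Lévy-Khintchine-type representation
\[
(1-\Delta)^{\alpha/2}\phi(x)\ =\ c_\alpha\,\phi(x)+\int_{\RR^n}[\phi(x)-\phi(y)]\,J_\alpha(x-y)\,dy
\]
with $c_\alpha\geq 0$ and $J_\alpha\geq 0$; for $\alpha=2$ one uses the Leibniz identity $\Delta(g^s/s)=g^{s-1}\Delta g+(s-1)g^{s-2}|\nabla g|^2$. In both cases, the tangent-line inequality for convex $\Phi$ (namely $\Phi(g(x))-\Phi(g(y))\leq \Phi'(g(x))(g(x)-g(y))$) gives the chain rule with a remainder of the form $-(s-1)(c_\alpha g^s/s)$ (respectively $-(s-1)g^s/s-(s-1)g^{s-2}|\nabla g|^2$); these remainders are non-positive and can be dropped, producing the stated inequality with a clean constant $A=s$ depending only on $s$.

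The main obstacle is the regime $\alpha>2$, where $(1-\Delta)^{\alpha/2}$ is of higher order and admits no Lévy representation, so the pointwise chain rule fails in general. Here the plan is to iterate via the Bessel composition $G_\alpha=G_{\alpha_1}*\cdots *G_{\alpha_k}$ with each $\alpha_i\in(0,2]$ and to apply the base case layer by layer; this produces commutator errors of the form $(G_{\beta}*f)\,g^{s-1}-G_{\beta}*(fg^{s-1})$ which must be controlled. It is precisely here that the small-diameter hypothesis on $\mathrm{supp}(\mu)$ becomes crucial: it ensures that $u$, $f$, and $g$ decay exponentially outside a fixed bounded region, and it permits invoking the Havin-Maz'ya-Hedberg-Wolff pointwise equivalence $g(x)\simeq W^\mu_{\alpha,s}(x)$ between the nonlinear Bessel potential and the truncated Wolff potential of $\mu$. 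Converting the commutator control to a scale-by-scale analysis of the dyadic quantity $W^\mu_{\alpha,s}$ is what I expect to be the technically hardest ingredient, while the convexity step supplies the structural backbone of the argument.
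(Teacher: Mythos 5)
Your route is genuinely different from the paper's, and it leaves a real gap in the regime that the lemma actually has to cover. The paper does not appeal to any chain rule or semigroup structure. After normalizing $\mathrm{supp}\,\mu\subset B_{1/2}(0)$, it splits $f=f\chi_{B_3(0)}+f\chi_{B_3(0)^c}=f_1+f_2$ and treats the two pieces by kernel estimates alone. For $f_1$ it uses the two-sided bound $G_\alpha(x)\simeq|x|^{\alpha-n}$ on bounded sets to transfer the known Riesz-kernel inequality $(I_\alpha*h)^s\leq A\,I_\alpha*[h(I_\alpha*h)^{s-1}]$ (Lemma 2.1 of \cite{Ver}) to $G_\alpha$ when $|x|<10$, and a direct monotonicity $G_\alpha(x-z)\leq G_\alpha(y-z)$ when $|x|\geq 10,\ |y|,|z|\leq 3$. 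For $f_2$ it proves the pointwise bound $f_2\lesssim G_\alpha*f$ using the quasi-translation-invariance of $G_\alpha$ at unit scale together with the specific structure $f=(G_\alpha*\mu)^{s'-1}$, and finishes by H\"older. This argument is uniform in $\alpha\in(0,n)$ and has no case split.

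Your $0<\alpha\leq 2$ case is essentially sound modulo regularity: subordination does give a nonnegative L\'evy kernel plus a nonnegative zeroth-order term for $(1-\Delta)^{\alpha/2}$ when $\alpha<2$, the tangent-line inequality for $\Phi(t)=t^s/s$ absorbs the local term because $s>1$, and hitting the resulting pointwise inequality with the positivity-preserving kernel $G_\alpha$ gives the claim with $A=s$. (One still has to justify that $g=G_\alpha*f$ is regular enough for the singular integral against $J_\alpha$ to converge and that $G_\alpha*\bigl[(1-\Delta)^{\alpha/2}\Phi(g)\bigr]=\Phi(g)$ pointwise; for $f=(G_\alpha*\mu)^{s'-1}$ this is delicate near $\mathrm{supp}\,\mu$.) Notably, that argument would not use the form of $f$ at all, while the paper's Remark stresses that for Bessel potentials the restriction to $f=(G_\alpha*\mu)^{s'-1}$ with small support is needed; this tension is worth resolving, but it is secondary to the main point.

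The main point is $\alpha>2$, which is unavoidable since $\alpha s\leq n$ allows $\alpha$ close to $n$. Writing $G_\alpha=G_{\alpha_1}*G_\beta$ with $\alpha_1\in(0,2]$ and applying your base case with $h=G_\beta*f$ gives $(G_\alpha*f)^s\leq s\,G_{\alpha_1}*\bigl[(G_\beta*f)(G_\alpha*f)^{s-1}\bigr]$, and to proceed one must show
\[
(G_\beta*f)\,(G_\alpha*f)^{s-1}\ \lesssim\ G_\beta*\bigl[f\,(G_\alpha*f)^{s-1}\bigr].
\]
This is not a convexity inequality: it is a quasi-commutation statement between the multiplier $(G_\alpha*f)^{s-1}$ and the smoothing $G_\beta*$, which is false for general $f$ and requires exactly the Wolff-potential, scale-by-scale control that you flag as ``the technically hardest ingredient'' without supplying it. As it stands the proposal for $\alpha>2$ is a program, not a proof, and it is precisely in that regime that the paper's spatial decomposition and local reduction to the Riesz case do the actual work.
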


\begin{remark} For Riesz potentials, this lemma has been established for all $f\geq 0$ in \cite{VW} (see also \cite{KV, Ver}). In our setting, which deals with  Bessel potentials, it is necessary to require $\mu$
	to have compact support.	
\end{remark}

\begin{proof}[Proof of Lemma \ref{IBPL}]
	Without loss of generality, we may assume that ${\rm supp}(\mu)\subset B_{1/2}(0)$.
	With $f=(G_{\alpha}*\mu)^{s'-1}$, we write $f=f_1+f_2$, where
	$$f_1= f\chi_{B_3(0)} \quad \text{and} \quad f_2= f\chi_{B_3(0)^c} \quad (B_3(0)^c=\RR^n\setminus B_3(0)).$$
	
	Then 
	\begin{equation}\label{split}
	(G_\alpha*f)^s\leq A [(G_\alpha*f_1)^s + (G_\alpha*f_2)^s].
	\end{equation}
	
	We shall use the following pointwise two-sided estimates for $G_\alpha$ (see, e.g., Section 1.2.4 in \cite{AH}):
	\begin{equation}\label{Ga1}
	G_\alpha(x) \simeq |x|^{\alpha -n}, \quad \forall |x|\leq 15, (0<\alpha<n).
	\end{equation}
	and 
	\begin{equation}\label{Ga2}
	G_\alpha(x) \simeq G_\alpha(x+y), \quad \forall |x|\geq 3, |y|\leq 1, (\alpha>0).
	\end{equation}
	
	We mention that  \eqref{Ga2} follows from the asymptotic behavior $G_\alpha$ near infinity that can be found, e.g., in Equ. (1.2.24) in \cite{AH}. 
	
	We now write
	\begin{equation*}
	[G_\alpha*f_1(x)]^s =\int_{|y|\leq 3} G_\alpha(x-y) f(y)\left[ \int_{|z|\leq 3} G_\alpha(x-z) f(z)dz\right]^{s-1} dy. 
	\end{equation*}
	
	Thus if $|x|\geq 10$, then $|x-z|\geq 7\geq  |y-z|$, which yields that 
	$$G_\alpha(x-z)\leq G_\alpha(y-z).$$
	
	Therefore, we get 
	\begin{equation*}
	[G_\alpha*f_1(x)]^s \leq G_{\alpha}*[f (G_{\alpha}*f)^{s-1}](x)
	\end{equation*}
	in the case $|x|\geq 10$.
	
	On the other hand, if $|x|<10$, then for $|y|\leq 3$ by \eqref{Ga1} we have
	$$ G_\alpha(x-y) \simeq |x-y|^{\alpha -n}.$$
	
	Thus applying Lemma 2.1 in \cite{Ver} we obtain
	\begin{equation*}
	[G_\alpha*f_1(x)]^s \leq A G_{\alpha}*[f_1 (G_{\alpha}*f_1)^{s-1}](x)\leq A G_{\alpha}*[f (G_{\alpha}*f)^{s-1}](x)
	\end{equation*}
	in the case $|x|<10$.
	
	Combining these two estimates we get that 
	\begin{equation}\label{Gf1}
	[G_\alpha*f_1(x)]^s \leq  A G_{\alpha}*[f (G_{\alpha}*f)^{s-1}](x), \quad \forall x\in \RR^n.
	\end{equation}

	To estimate $[G_\alpha*f_2(x)]^s$ we first observe the following bound
	\begin{equation}\label{Gf21}
	f_2(x)\leq A G_\alpha*f(x), \quad \forall x\in \RR^n.
	\end{equation}
	
	Inequality \eqref{Gf21} is trivial when $|x|<3$. On the other hand, for $|x|\geq 3$, we have by \eqref{Ga2},
	\begin{align*}
	(f_2(x))^{s-1}&=\int_{|y|<1/2} G_\alpha(x-y) d\mu(y)\leq A \int_{|y|<1/2} G_\alpha(x) d\mu(y) \\
	& = A \norm{\mu} G_\alpha(x).
	\end{align*}
	
	Note that for $|y-x|<1/2$ and $|x|\geq 3$, by \eqref{Ga2} we have 
	$$f(y)^{s-1}=\int_{|z|<1/2} G_\alpha(y-z) d\mu(z)\geq c_0\, G_\alpha(x) \norm{\mu},$$
	and so, for $|x|\geq 3$,
	\begin{align*}
	G_\alpha*f(x) &\geq \int_{|y-x|<1/2} G_\alpha(x-y) f(y) dy\\
	&\geq \int_{|y-x|<1/2} G_\alpha(x-y)  (c_0\, G_\alpha(x) \norm{\mu})^{s'-1} dy\\
	&\geq c\, (\norm{\mu} G_\alpha(x))^{s'-1} \geq c_1 f_2(x).
	\end{align*}
	
	Thus  \eqref{Gf21} is verified.
	Now by H\"older's inequality and \eqref{Gf21} we have 
	\begin{equation}\label{Gf22}
	[G_\alpha*f_2]^s \leq A G_\alpha*(f_2^s) \leq A_1 G_\alpha*[f (G_\alpha *f)^{s-1}].
	\end{equation}

	At this point, combining   \eqref{split}, \eqref{Gf1}, and \eqref{Gf22}, we obtain  the lemma.
\end{proof}

We are now ready to prove Theorem \ref{funequNew}.

\begin{proof}[Proof of Theorem \ref{funequNew}]
	Let $u$ be a q.e. defined function in $\RR^n$. Suppose that $f$ is a nonnegative measurable function such that $G_\alpha*f\geq |u|$ quasi-everywhere. Then by \eqref{GfL1C} and \eqref{mvn} it follows that
	\begin{align*}
	&\int_{\RR^n} |u| d {\rm Cap}_{\alpha,s} \leq \int_{\RR^n} G_\alpha * f d {\rm Cap}_{\alpha,s} \\
	& \leq A_1 \norm{f}_{(M^{\alpha, s}_{s'})'} \leq A_2 \int_{\RR^n} f^s (G_\alpha*f)^{1-s} dx. 
	\end{align*}
	Now taking the infimum over such $f$ we arrive at
	$$\int_{\RR^n} |u| d {\rm Cap}_{\alpha,s} \lesssim \lambda_{\alpha,s}(u) \lesssim \beta_{\alpha,s}(u).$$
	
	Thus to complete the proof, it is left to show that 
	\begin{equation}\label{nonlinearnorm}
	\beta_{\alpha,s}(u) \lesssim \int_{\RR^n} |u| d {\rm Cap}_{\alpha,s}.
	\end{equation} 
	
	To this end, we first show \eqref{nonlinearnorm} for $u=\chi_{E}$, where  $E$ is any set  such that ${\rm Cap}_{\alpha,s}(E)>0$ and the diameter of $E$ is less than $1$.
	By Theorems 2.5.6 and 2.6.3 in \cite{AH}
	one can find a nonnegative measure $\mu=\mu^{E}$ with ${\rm supp}(\mu) \subset \overline{E}$ (called capacitary measure for $E$) such that the function 
	$V^{E}=G_\alpha*((G_\alpha*\mu)^{s'-1})$ satisfies the following properties:
	
	\begin{equation*}
	\mu^E(\overline{E})={\rm Cap}_{\alpha,s}(E)=\int_{\RR^n} V^E d\mu^E= \int_{\RR^n} (G_\alpha *\mu^{E})^{s'} dx,
	\end{equation*}
	and
	\begin{equation*}
	V^E\geq 1 \quad \text{quasieverywhere on }E.
	\end{equation*}

	Let $f= (G_\alpha*\mu)^{s'-1}$. By Lemma  \ref{IBPL}, we have 
	$$\chi_{E}\leq (V^E)^s =(G_{\alpha}*{f})^s\leq A G_{\alpha}*[f (G_{\alpha}*f)^{s-1}] \quad \text{q.e.}$$

	Thus,
	\begin{align*}
	\beta_{\alpha,s}(\chi_E) &\leq A \int_{\RR^n} f^s (G_{\alpha}*f)^{(s-1)s}  \left\{G_{\alpha}*[f (G_{\alpha}*f)^{s-1}]\right\}^{1-s}dx\\
	& \leq A \int_{\RR^n} f^s (G_{\alpha}*f)^{(s-1)s} (G_{\alpha}*f)^{(1-s)s}dx\\
	& = A \int_{\RR^n} f^s dx= A \int_{\RR^n}  (G_\alpha*\mu)^{s'}dx =A\, {\rm Cap}_{\alpha,s}(E), 
	\end{align*}
	as desired.

	We now let $\{\mathcal{B}^j\}_{j\geq0}$ be a covering of $\RR^n$ by open balls with unit diameter. This covering is chosen in such a way that it has a finite multiplicity 
	depending only on $n$. We shall use the following quasi-additivity of ${\rm Cap}_{\alpha,s}$:
	\begin{equation}\label{quasi-add}
	\sum_{j\geq0} {\rm Cap}_{\alpha,s}(E\cap \mathcal{B}^j) \leq M {\rm Cap}_{\alpha,s}(E)
	\end{equation}
	for any set $E\subset\RR^n$. For compact sets $E$, a proof of \eqref{quasi-add} can be found in Proposition 3.1.5 in \cite{MS}. The same proof also works for any set $E$ provided one uses Corollary 2.6.8 in \cite{AH}.
	
	In proving \eqref{nonlinearnorm} we may assume that $\int_{\RR^n} |u|d {\rm Cap}_{\alpha,s}<+\infty$.
	Let $E_k=\{2^{k-1}<|u|\leq 2^{k}\}$ and $E_{j,k}=E_k\cap \mathcal{B}^j$ for $k\in \ZZ$ and $j\geq 0$.  
	We have 
	\begin{equation}\label{betasum}
	\beta_{\alpha,s}(u)=\beta_{\alpha,s}\left(\sum_{k\in \ZZ} |u| \chi_{E_{k}}\right)\leq  \beta_{\alpha,s}\left(\sum_{k\in \ZZ}\sum_{j\geq 0} |u| \chi_{E_{j,k}}\right).
	\end{equation}
	
	For $k\in\ZZ$ and $j\geq0$, let 
	$$f_{j,k}= (G_\alpha*\mu^{E_{j,k}})^{s'-1} \quad \text{and} \quad F_{j,k}= f_{j,k} (G_\alpha*f_{j,k})^{s-1}.$$
	
	By the above argument, we have 
	$$G_\alpha *(2^k F_{jk})\geq c\, |u|\chi_{E_{j,k}}\quad \text{q.e.}$$
	and 
	$$\int_{\RR^n} (2^k F_{jk})^s (G_\alpha* (2^k F_{j,k}))^{1-s} dx \leq A 2^k {\rm Cap}_{\alpha,s}(E_{jk}).$$
	
	By \eqref{mvn}, this gives 
	\begin{equation}\label{FjkMpr}
	\norm{2^k F_{j,k}}_{(M^{\alpha,s}_{s'})'}\leq A 2^k {\rm Cap}_{\alpha,s}(E_{jk}).
	\end{equation}
	
	Set $F=\sup_{j,k} 2^k F_{j,k}$.  Then we have 
	$(G_\alpha*F)^{1-s}\leq (G_\alpha*(2^k F_{j,k}))^{1-s}$ for any $k\in\ZZ$ and $j\geq 0$. Moreover,  
	$$G_{\alpha}*F\geq c\, \sum_{k\in \ZZ} |u| \chi_{E_{k}} \geq  c_1\, \sum_{k\in \ZZ} \sum_{j\geq 0}|u| \chi_{E_{j,k}}\qquad \text{q.e.}$$
	due to the finite multiplicity of $\{\mathcal{B}^j\}_{j\geq 0}$.
	Also, it follows from  \eqref{quasi-add}  and \eqref{FjkMpr} that 
	\begin{align*}
	\norm{F}_{(M^{\alpha,s}_{s'})'}&\leq C_1 \sum_{k\in\ZZ} \sum_{j\geq 0}2^k {\rm Cap}_{\alpha,s}(E_{j,k}) \leq C_2 \sum_{k\in\ZZ} 2^k {\rm Cap}_{\alpha,s}(E_{k}) \\
	& \leq A \int_{\RR^n} |u| d {\rm Cap}_{\alpha,s} <+\infty.
	\end{align*}
	
	In particular, $F$ is finite a.e. and thus there is  a set $N$ such that $|N|=0$ and  
	$$\RR^n=\cup_{k\in\ZZ, j\geq 0} \{0<F\leq 2^{k+1} F_{j,k}\}\cup \{F=0\} \cup N.$$
	
	Thus we find
	\begin{align*}
	\beta_{\alpha,s}&\left( \sum_{k\in \ZZ} \sum_{j\geq 0}|u| \chi_{E_{j,k}}\right) \leq  A \int_{\RR^n} F^s (G_\alpha*F)^{1-s} dx	\\
	&\leq A \sum_{k\in\ZZ} \sum_{j\geq 0} \int_{\{0<F\leq 2^{k+1} F_{j,k}\}} F^s (G_\alpha*F)^{1-s} dx\\
	&\leq A \sum_{k\in\ZZ} \sum_{j\geq 0} \int_{\RR^n} (2^kF_{j,k})^s (G_\alpha*(2^k F_{j,k}))^{1-s} dx\\
	&\leq A \sum_{k\in\ZZ} \sum_{j\geq 0} 2^k {\rm Cap}_{\alpha,s}(E_{jk})\leq C \int_{\RR^n} |u| d {\rm Cap}_{\alpha,s}.
	\end{align*}
	
	Inequality \eqref{nonlinearnorm} now follows from \eqref{betasum} and the last bound, which completes the proof of the theorem.
\end{proof}

\begin{remark} For Riesz potentials $I_\alpha *f$ and Riesz capacities ${\rm cap}_{\alpha, s }$, where $\alpha\in(0,n)$ and $s>1$, the corresponding bound \eqref{nonlinearnorm}
	can be obtained using  \eqref{funequN} and the pointwise bound 
	\begin{equation}\label{VWh}
	(I_{\alpha}*f)^s \leq A I_{\alpha}*[f (I_{\alpha}*f)^{s-1}], 
	\end{equation}	 
	which holds for all nonnegative measurable functions $f$ (see \cite{VW, Ver}). Indeed,	for any $f\geq 0$ such that $I_\alpha*f\geq |u|^{\frac{1}{s}}$ q.e., by  
	\eqref{VWh} we have $A I_{\alpha}*[f (I_{\alpha}*f)^{s-1}] \geq  |u|$ q.e., and thus again by \eqref{VWh}, 
	\begin{align*}
	\beta_{\alpha,s}(u) &\leq A \int_{\RR^n} f^s (I_{\alpha}*f)^{(s-1)s} I_\alpha*[f (I_{\alpha}*f)^{s-1}]^{1-s} dx\\
	&\leq A \int_{\RR^n} f^s dx.
	\end{align*}
	
	Minimizing over such $f$ and recalling \eqref{funequN}, we get the corresponding bound \eqref{nonlinearnorm} as desired.
\end{remark}

\section{Proof of Theorem \ref{Maximal}}
\begin{proof}[Proof of Theorem \ref{Maximal}]
	By Theorem \ref{Newnorm2}, we have 
	$$\int_{\RR^n}|f|^q d {\rm Cap}_{\alpha,s} \simeq \inf\left\{\int_{\RR^n} h^s (G_\alpha*h)^{1-s} dx: h\geq0, (G_{\alpha}* h)^{\frac{1}{q}}\geq |f| \text{ q.e.} \right\}.$$
	
	On the other hand, for any	$h\geq0$ and  $(G_{\alpha}* h)^{\frac{1}{q}}\geq |f|$  q.e. by  Theorem 3.1 in \cite{OP} we have 
	$${\bf M}^{\rm loc} f \leq {\bf M}^{\rm loc} [(G_{\alpha}* h)^{\frac{1}{q}}]\leq A (G_{\alpha}* h)^{\frac{1}{q}} $$
	pointwise everywhere, provided $q>(n-\alpha)/n$. Thus
	\begin{align*}
	&\int_{\RR^n}|f|^q d {\rm Cap}_{\alpha,s} \\
	&\geq c\,  \inf\left\{\int_{\RR^n} g^s (G_\alpha*g)^{1-s} dx: g\geq0, (G_{\alpha}* g)^{\frac{1}{q}}\geq {\bf M}^{\rm loc} f  \text{ q.e.} \right\}\\
	&\simeq \int_{\RR^n} ({\bf M}^{\rm loc} f)^q d {\rm Cap}_{\alpha,s}.
	\end{align*}
	
	This completes the proof of the theorem.
\end{proof}		

\vspace{.2in}
\noindent {\bf Acknowledgements.} N.C. Phuc is supported in part by Simons Foundation, award number 426071.

\vspace{.2in}

\end{document}